\newcommand{\R}{\mathbb{R}}
\newcommand{\BR}{{[-\infty,\infty]}}
\newcommand{\calP}{\mathcal{P}}
\DeclareMathOperator{\dom}{dom}
\DeclareMathOperator{\range}{R}
\DeclareMathOperator{\graph}{Gr}
\newcommand{\inner}[2]{\langle{#1},{#2}\rangle}
\newcommand{\tos}{\rightrightarrows} 
\newtheorem{theorem}{Theorem}[section]
\newtheorem{lemma}[theorem]{Lemma}
\newtheorem{proposition}[theorem]{Proposition}
\newtheorem{definition}[theorem]{Definition}
\renewcommand{\hat}{\widehat}
\renewcommand{\tilde}{\widetilde}
\title{A maximal monotone operator of type (D) which 
maximal monotone extension to the bidual is not of type (D)}
\author{Orestes Bueno\footnote{Instituto de Mat\'ematica Pura e Aplicada 
(IMPA), Estrada Dona
Castorina 110, Rio de Janeiro, RJ, CEP 22460-320, Brazil,
{\tt obueno@impa.br}.
The work of this author author was partially supported by CNPq grant no.}
\qquad B. F. Svaiter\footnote{Instituto de Mat\'ematica Pura e Aplicada, 
(IMPA), Estrada Dona
Castorina 110, Rio de Janeiro, RJ, CEP 22460-320, Brazil,
{\tt benar@impa.br}.
The work of this author author was partially supported by
 CNPq grants no. 
474944/2010-7, 303583/2008-8 and  FAPERJ grant E-26/110.821/2008.
}}
\begin{document}

\maketitle

\begin{abstract}
We define a family of linear type (D) operators for which the inverse of their 
maximal monotone extensions to the bidual are not of type (D) and provide an
example of an operator in this family.  \\

keywords: maximal monotone, type (D), Banach space, extension, bidual. 
\end{abstract}

\section{Introduction}
Let $U$, $V$ arbitrary sets. A \emph{point-to-set} (or multivalued) operator
$T:U\tos V$ is a map $T:U\to \calP(V)$, where $\calP(V)$ is the power set
 of $V$. Given 
$T:U\tos V$, the \emph{graph} of $T$ is the set
\[
\graph(T):=\{(u,v)\in U\times V\:|\:v\in T(u)\},
\]
the  \emph{domain} and the \emph{range} of $T$ are, respectively,
\[
\dom(T):=\{u\in U\:|\:T(u)\neq\emptyset\},\qquad
\range(T):=\{v\in V\:|\:\exists u\in U, \;v\in T(u) \}
\]
and the \emph{inverse} of $T$ is the  point-to-set operator $T^{-1}:V\tos U$,
\[
 T^{-1}(v)=\{u\in U\:|\:v\in T(u)\}.
\]
A point-to-set operator $T:U\tos V$ is called \emph{point-to-point} if for
every $u\in\dom(T)$, $T(u)$ has only one element and in this case  we use the notation
$T:U\to V$ (which \emph{does not} mean  that $\dom(T)=U$).
Trivially, a point-to-point operator is injective if, and only if,
its inverse is also point-to-point.

Let $X$ be a real Banach space. We use the notation $X^*$ for the topological
dual of $X$. We will identify $X$ with its canonical injection into $X^{**}=(X^*)^*$
and we will use the notation $\inner{\cdot}{\cdot}$ for the duality product,
\[
\inner{x}{x^*}=\inner{x^*}{x}=x^*(x),\qquad x\in X,x^*\in X^*.
\]
A point-to-set operator $T:X\tos X^*$ (respectively $T:X^{**}\tos X^*$) is \emph{monotone}, if
\[
\inner{x-y}{x^*-y^*}\geq 0,\quad \forall (x,x^*),(y,y^*)\in \graph(T),
\]
(resp. $\inner{x^*-y^*}{x^{**}-y^{**}}\geq 0$, $\forall (x^{**},x^*),(y^{**},y^*)\in \graph(T)$), and it is \emph{maximal monotone} if it is monotone and maximal in the family of monotone operators in $X\times X^*$ (resp. $X^{**}\times X^*$)
with respect to the order of inclusion of the graphs.

Let $X$ be a \emph{non-reflexive} real Banach space and $T:X\tos X^*$ be
maximal monotone.
Since $X\subset X^{**}$, the point-to-set operator $T$ can
also be regarded as an operator from $X^{**}$ to $X^*$. We denote $\hat{T}:X^{**}\tos X^*$ as the operator such that
\[
\graph(\hat{T})=\graph(T).
\]
If $T:X\tos X^*$ is maximal monotone then $\hat{T}$ is (still) trivially monotone but, in general, not maximal monotone. 
%
%
Direct use of  the Zorn's Lemma shows that $\widehat{T}$ has a 
maximal monotone extension.
%
So it is natural to ask if such maximal monotone extension to the bidual is
unique. Gossez \cite{JPGos0,JPGos1,JPGos3,JPGos2} gave a sufficient condition for uniqueness of
such an extension.

\begin{definition}[\cite{JPGos0}]
  Gossez's \emph{monotone closure} (with respect to $X^{**}\times X^*$) 
of a maximal monotone operator $T:X\tos X^*$, is 
the point-to-set operator $\widetilde T:X^{**}\tos X^*$ whose graph
$\graph\left(\widetilde T\right)$ is
given by
\[
\graph\left(\widetilde T\right) = \{(x^{**},x^*)\in X^{**}\times X^*\:|\:\inner{x^*-y^*}{x^{**}-y}\geq 0,\,\forall (y, y^*) \in T\}. 
\]
A maximal monotone operator $T : X\tos X^* $, is of \emph{Gossez type (D)} if for any
$(x^{**},x^*) \in \graph\left( \widetilde T \right)$ , there exists a
bounded net $\bigg((x_i, x^*_i)\bigg)_{i\in }$ in $\graph(T)$
which converges to $(x^{**}, x^*)$ in the $\sigma(X^{**},
X^*)\times$strong topology of $X^{**}\times X^*$.
\end{definition}

Gossez proved~\cite{JPGos2} that a maximal monotone operator $T:X\tos X^*$ of
of type (D) has unique maximal monotone extension to the 
bidual, namely, its Gossez's monotone closure $\widetilde T:X^{**}\tos X^*$.
Beside this fact, maximal monotone operators of type (D) share 
many properties with maximal monotone operators defined in \emph{reflexive} Banach
spaces, as for example, convexity of the closure of the domain and convexity
of the closure of the range \cite{JPGos0}.

If $f:X\to\BR$ is a proper convex and lower semi-continuous function, then
$\partial f:X\tos X^*$
is of type (D) and the unique maximal monotone extension of $\partial f$ to
$X^{**}\times X^*$ is $(\partial f^*)^{-1}$, which is the inverse of a new type
(D) operator, namely $\partial f^*: X^*\tos X^{**}$.
It is natural to ask if this ``hereditary'' property of the subdifferential is
shared by other maximal monotone operators of type (D). Explicitly, if $T:X\tos X^*$ is of type (D),
is $(\widetilde{T}\;)^{-1}:X^*\tos X^{**}$ also of type (D)?  
The answer to this question is negative in general. We will provide an example
of a type (D) operator $T:X\tos X^*$ for which the inverse of its (unique)
maximal monotone extension to
$X^{**}\times X^*$ is \emph{not} of type (D).

We will need the following characterization of maximal monotone operators of type (D),
provided in \cite[eq.\ (5) and Theorem 4.4, item 2]{BSMMA-TypeD}.

\begin{theorem}
  \label{th:cr}
  A maximal monotone operator $T:X\tos X^*$ is of type (D) if and only if
  \[
  \sup_{(y,y^*)\in T}\inner{y}{x^*}+\inner{y^*}{x^{**}}-\inner{y^*}{y}\geq\inner{x^*}{x^{**}}
  \]
  for any $(x^*,x^{**})\in X^*\times X^{**}$.
\end{theorem}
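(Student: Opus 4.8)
The plan is as follows. Throughout, write $\mathcal{F}_T(x^{**},x^*)$ for the left-hand supremum in the statement; it is the natural extension to $X^{**}\times X^*$ of the Fitzpatrick function of $T$, and as a supremum of functions that are affine and continuous for $\sigma(X^{**},X^*)\times\norm{\cdot}$ it is convex and lower semicontinuous for that topology. My first step is a reformulation: expanding $\inner{x^*-y^*}{x^{**}-y}\ge0$ shows that, for each $(y,y^*)\in T$, the inequality $\inner{x^*}{x^{**}}\ge\inner{y}{x^*}+\inner{y^*}{x^{**}}-\inner{y^*}{y}$ is exactly the condition that $(x^{**},x^*)$ be monotonically related to $(y,y^*)$, so that
\[
\graph\left(\widetilde T\right)=\{(x^{**},x^*)\in X^{**}\times X^*\:|\:\mathcal{F}_T(x^{**},x^*)\le\inner{x^*}{x^{**}}\}.
\]
Hence the inequality $\mathcal{F}_T\ge\inner{x^*}{x^{**}}$ holds automatically, and strictly, at every point outside $\graph(\widetilde T)$, and the theorem collapses to the claim that $T$ is of type (D) if and only if $\mathcal{F}_T(x^{**},x^*)=\inner{x^*}{x^{**}}$ for every $(x^{**},x^*)\in\graph(\widetilde T)$.

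For the direction ``type (D) $\Rightarrow$ inequality'', I fix $(x^{**},x^*)\in\graph(\widetilde T)$ and take the bounded net $\bigl((x_i,x^*_i)\bigr)_i$ in $\graph(T)$ converging to $(x^{**},x^*)$ in $\sigma(X^{**},X^*)\times\norm{\cdot}$ furnished by the definition of type (D). Each $(x_i,x_i^*)$ is admissible in the supremum, so $\mathcal{F}_T(x^{**},x^*)\ge\inner{x_i}{x^*}+\inner{x_i^*}{x^{**}}-\inner{x_i^*}{x_i}$, and I pass to the limit term by term: $\inner{x_i}{x^*}\to\inner{x^{**}}{x^*}$ by weak-star convergence of $x_i$; $\inner{x_i^*}{x^{**}}\to\inner{x^*}{x^{**}}$ by strong convergence of $x_i^*$; and $\inner{x_i^*}{x_i}\to\inner{x^{**}}{x^*}$ because $\inner{x_i}{x_i^*-x^*}\to0$ (the net $(x_i)$ is bounded and $\norm{x_i^*-x^*}\to0$) while $\inner{x_i}{x^*}\to\inner{x^{**}}{x^*}$. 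This gives $\mathcal{F}_T(x^{**},x^*)\ge\inner{x^*}{x^{**}}$, and with the reverse inequality from membership in $\graph(\widetilde T)$ we obtain the desired equality.

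The converse, ``inequality $\Rightarrow$ type (D)'', is the heart of the matter and is precisely Simons' implication ``type (NI) $\Rightarrow$ type (D)''. Fixing $(x^{**},x^*)\in\graph(\widetilde T)$, the equality $\mathcal{F}_T(x^{**},x^*)=\inner{x^*}{x^{**}}$ is, after the expansion above, the same as $\inf_{(y,y^*)\in T}\inner{x^*-y^*}{x^{**}-y}=0$, so one extracts at once a net $(y_i,y_i^*)$ in $\graph(T)$ with $\inner{x^*-y_i^*}{x^{**}-y_i}\to0$. The task is to upgrade this to a \emph{bounded} net converging to $(x^{**},x^*)$ in $\sigma(X^{**},X^*)\times\norm{\cdot}$, and the difficulty is that a single scalar tending to zero controls neither the norms of the iterates nor either of the two required modes of convergence.

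My plan for this last step is, first, to restore compactness by a coercive perturbation---penalizing the supremum with a multiple of the duality map so that near-maximizers are confined to a bounded set and thus admit $\sigma(X^{**},X^*)\times\sigma(X^*,X^{**})$-cluster points, using the weak-star density of $X$ in $X^{**}$ (Goldstine) to keep the first component near $x^{**}$---and, second, to argue by separation: were $(x^{**},x^*)$ outside the $\sigma(X^{**},X^*)\times\norm{\cdot}$-closure of the relevant bounded part of $\graph(T)$, a Hahn--Banach argument in $X^{**}\times X^*$ (whose pertinent dual is $X^*\times X^{**}$) would produce a separating pair which, fed back into $\mathcal{F}_T$, contradicts the equality. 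The genuinely delicate point---and the main obstacle---is to force \emph{strong}, rather than merely weak-star, convergence of the dual components $y_i^*\to x^*$; this is exactly the feature distinguishing type (D), and the reflexive-space shortcut through Rockafellar's surjectivity theorem is unavailable because neither $X$ nor $X^{**}$ need be reflexive. I would extract the strong convergence from a quantitative analysis of the near-maximizers at the equality point, showing that any failure of $\norm{y_i^*-x^*}\to0$ can be excluded by a further perturbation/separation step, thereby clustering the approximating dual variables in norm rather than only weakly-star.
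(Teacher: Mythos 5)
Your reformulation and the ``type (D) $\Rightarrow$ inequality'' half are correct: the expansion identifying $\graph(\widetilde T)$ with the set where $\mathcal{F}_T(x^{**},x^*)\le\inner{x^*}{x^{**}}$, the observation that the inequality is automatic (and strict) off that set, and the limit computation along the bounded net are all sound. The genuine gap is the converse, which you yourself call the heart of the matter: what you offer there is a program, not a proof, and its two pivotal steps fail as described. First, the Hahn--Banach step is inapplicable: $\graph(T)$, or any bounded piece of it, is not convex, so a point outside its $\sigma(X^{**},X^*)\times{}$strong closure need not be linearly separated from it; passing to the closed convex hull restores separation but severs the link with $\mathcal{F}_T$, whose equality at $(x^{**},x^*)$ only controls the specific affine functionals $(y,y^*)\mapsto\inner{y}{x^*}+\inner{y^*}{x^{**}}-\inner{y^*}{y}$, not an arbitrary separating pair in $X^*\times X^{**}$. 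Second, the strong convergence of the dual components---which you correctly single out as the main obstacle---is exactly what you defer to ``a quantitative analysis'' and ``a further perturbation/separation step''; no such argument is given, and none of the compactness tools you invoke (Goldstine, weak-star cluster points) can produce norm convergence in $X^*$.

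There is also a misattribution that matters for the logic of your plan: Simons proved that type (D) implies type (NI); the implication you need, (NI) $\Rightarrow$ (D), is the main theorem of Marques Alves and Svaiter, and is in fact precisely the result \cite[eq.\ (5) and Theorem 4.4]{BSMMA-TypeD} that the present paper cites for Theorem~\ref{th:cr}---the paper itself supplies no proof, it quotes one. The known proof of that direction is not a compactness-plus-separation argument on the graph: it is convex-analytic, working with the Fitzpatrick function and its conjugate and using a Br{\o}ndsted--Rockafellar/Ekeland-type variational argument to convert approximate equality of $\mathcal{F}_T$ with the duality product into genuine points of $\graph(T)$ that are simultaneously norm-close in the dual variable, confined to a prescribed weak-star neighbourhood in the primal variable, and bounded. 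Reproducing that machinery (or citing it, as the paper does) is what your proposal is missing; as written, the hard direction remains unproved.
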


\section{A special family of linear type (D) operators}
In this section we will characterize a family of maximal monotone operators of type (D)
for which the inverse of its maximal monotone extensions to the bidual are not of
type (D).

\begin{theorem}\label{thm:main}
Let $A:X^*\to X$ be linear, monotone, injective and everywhere defined,
and let
\[ 
 T=A^{-1}:R(A)\subset X\to X^*.
\]
 If there exists $x^{**}_0\in X^{**}\setminus X$ such that
\begin{equation}\label{eq:hipo}
  \sup_{x^*\in X^*} \inner{x^*}{x^{**}_0}-\inner{A(x^*)}{x^*}<\infty  
\end{equation}
then
\begin{enumerate}
  \item $T$ is maximal monotone of type (D).
  \item $\tilde{T}=\hat{T}$.
\item $(\,\widetilde{T}\;)^{-1}:X^*\tos X^{**}$ is not of type (D) on $X^*\times X^{**}$.
\end{enumerate}
\end{theorem}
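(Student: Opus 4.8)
The plan is to recognize $(\widetilde T)^{-1}$ concretely and then apply the criterion of Theorem~\ref{th:cr} to it, taking the base Banach space to be $X^*$. First, using item~2, I would note that $\graph(\widetilde T)=\graph(T)=\{(A(x^*),x^*)\:|\:x^*\in X^*\}\subset X^{**}\times X^*$, so that $S:=(\widetilde T)^{-1}:X^*\tos X^{**}$ has graph $\{(x^*,A(x^*))\:|\:x^*\in X^*\}$ with each $A(x^*)\in X\subset X^{**}$; thus $S$ is just $A$ viewed as an operator from $X^*$ into $X^{**}$. Since inversion merely reflects the graph across the diagonal, it preserves monotonicity and maximality, so $S$ is maximal monotone on $X^*\times X^{**}$, which is exactly what is needed to invoke Theorem~\ref{th:cr} with the base Banach space $Y=X^*$ (whence $Y^*=X^{**}$ and $Y^{**}=X^{***}$).

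Next I would write out the type (D) criterion for $S$: it is of type (D) if and only if for every $(u^*,u^{**})\in X^{**}\times X^{***}$,
\[
\sup_{x^*\in X^*}\ \inner{x^*}{u^*}+u^{**}(A(x^*))-\inner{A(x^*)}{x^*}\ \geq\ u^{**}(u^*),
\]
where the three terms in the supremum are, respectively, the pairing of $x^*\in Y$ with $u^*\in Y^*$, of $A(x^*)\in Y^*$ with $u^{**}\in Y^{**}$, and of $A(x^*)\in Y^*$ with $x^*\in Y$ (the last being the ordinary duality product $\inner{A(x^*)}{x^*}$ because $A(x^*)\in X$). To disprove type (D) it then suffices to produce a single pair $(u^*,u^{**})$ for which this inequality is strictly reversed.

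For the witness I would use the element $x^{**}_0$ of \eqref{eq:hipo}. Set $c:=\sup_{x^*\in X^*}\inner{x^*}{x^{**}_0}-\inner{A(x^*)}{x^*}$, finite by hypothesis. Because $X$ is a closed proper subspace of $X^{**}$ (it is proper since $X$ is non-reflexive) and $x^{**}_0\notin X$, Hahn--Banach furnishes $u^{**}\in X^{***}$ with $u^{**}|_X=0$ and $u^{**}(x^{**}_0)=c+1$, and I would take $u^*:=x^{**}_0$. Since $\range(A)\subset X$ and $u^{**}$ annihilates $X$, the mixed term vanishes, $u^{**}(A(x^*))=0$ for all $x^*$, so the left-hand side collapses to exactly $c$, while the right-hand side is $u^{**}(u^*)=c+1$. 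The strict inequality $c<c+1$ then violates the criterion, proving that $(\widetilde T)^{-1}$ is not of type (D).

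The individual steps are short; the part demanding care is purely the bookkeeping of the three nested duality pairings over $X^*\subset X^{**}\subset X^{***}$, together with the use of item~2 to guarantee that the supremum runs over the graph $\{(x^*,A(x^*))\}$ and nothing larger. The one genuinely load-bearing idea is that, because $\range(A)\subset X$, any tridual functional killing $X$ erases the mixed term and isolates precisely the quantity that \eqref{eq:hipo} keeps finite, all while still detecting that $x^{**}_0\notin X$ on the right-hand side.
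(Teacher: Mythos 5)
Your argument for item \emph{3} is correct and is essentially the paper's own proof of that item: the paper likewise sets $\beta=\sup_{x^*\in X^*}\inner{x^*}{x^{**}_0}-\inner{A(x^*)}{x^*}$, uses Hahn--Banach to produce $x^{***}_0\in X^{***}$ vanishing on $X$ with $\inner{x^{**}_0}{x^{***}_0}>\beta$, exploits $\range(A)\subset X$ to annihilate the mixed term, and then invokes Theorem~\ref{th:cr} with base space $X^*$. Your bookkeeping of the three nested pairings is accurate, and making explicit that $(\widetilde T\,)^{-1}$ is maximal monotone in $X^*\times X^{**}$ (so that Theorem~\ref{th:cr} is actually applicable) is a point the paper leaves implicit.

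However, as a proof of the stated theorem your proposal has a genuine gap: it establishes only item \emph{3}, while items \emph{1} and \emph{2} are taken as inputs (``using item~2, I would note that $\graph(\widetilde T)=\graph(T)$\dots'') rather than proven. These are not free: they are exactly what guarantees that the supremum in the criterion runs over $\{(x^*,A(x^*))\,|\,x^*\in X^*\}$ and nothing larger, and that $(\widetilde T\,)^{-1}$ is maximal monotone at all. The paper derives them by observing that $T=A^{-1}$ is linear, monotone and point-to-point with $\range(T)=\dom(A)=X^*$, and then citing Theorem 6.7 of Phelps--Simons, which yields both that $T$ is maximal monotone of type (D) in $X\times X^*$ and that $\hat T$ is maximal monotone in $X^{**}\times X^*$; the latter gives $\widetilde T=\hat T$ at once, since $\graph(\widetilde T)$ is by definition the set of pairs monotonically related to $\graph(\hat T)$, and for a maximal monotone operator that set coincides with the graph. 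To be complete, your proposal needs this ingredient or a substitute (for instance, a direct verification that any $(x^{**},x^*)$ monotonically related to $\graph(\hat T)$ must satisfy $x^{**}=A(x^*)$, which handles maximality and item \emph{2}, together with a separate argument that $T$ is of type (D)).
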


\begin{proof}
  Since $A$ is injective, linear and monotone, the operator $T=A^{-1}:R(A)\to X^*$   is point-to-point, linear and  monotone.
Moreover
\[ \range (T)=\dom (A)=X^*.
\]
Therefore, using Theorem 6.7 of~\cite{PheSim}
we conclude that
\begin{description}
\item[a)] $T$ is maximal monotone of type (D) in $X\times X^*$,
\item[b)] $\hat{T}$ is maximal monotone in $X^{**}\times X^*$,
\end{description}
which proves item {\it 1} and {\it 2}.

To prove item {\it 3}, let
\[ 
\beta=  \sup_{x^*\in X^*}\inner{x^*}{x^{**}_0}-\inner{A(x^*)}{x^*}<\infty.
\]
As $X$ is a closed subspace of $X^{**}$,  and $x^{**}_0\in X^{**}\setminus X$, there exists
$x^{***}_0:X^{**}\to \R$, such that
\[ 
x^{***}_0\in X^{***},\qquad x^{***}_0(x)=0\quad \forall x\in X,
\qquad \inner{x^{**}_0}{x^{***}_0}> \beta.
\]
Since $\widetilde T=\widehat T$, using the definition of  $\widehat T$ we have 
\[
(\widetilde T\;)^{-1}(x^*)=(\widehat T\,)^{-1}(x^*)=T^{-1}(x^*)=A(x^*)\in X,
\]
for any $x^*\in X^*$.
Therefore $(\,\widehat T\,)^{-1}$ is point-to-point and
\begin{align*}
  \sup_{(x^*,x^{**})\in (\widetilde{T})^{-1}} \inner{x^*}{x^{**}_0}
    +\inner{x^{**}}{x^{***}_0}-\inner{x^*}{x^{**}}
  &=\sup_{x^*\in X^*} \inner{x^*}{x^{**}_0}+\inner{A(x^{*})}{x^{***}_0}-\inner{A(x^*)}{x^*}\\
  &=\sup_{x^*\in X^*} \inner{x^*}{x^{**}_0}-\inner{A(x^*)}{x^*}\\
  &=\beta<\inner{x^{**}_0}{x^{***}_0}.	
\end{align*}
To end the proof, use the above inequality and Theorem~\ref{th:cr} to conclude that
$(\,\widetilde T\,)^{-1}$ is not of type (D).
\end{proof}

\section{The family defined in Theorem~\ref{thm:main} is non-empty}

In this section  we will provide an explicit
example of an operator in the family described in Theorem~\ref{thm:main}.
Form now on, $\ell^1=\ell^1(\mathbb{N})$ and $\ell^\infty=\ell^\infty(\mathbb{N})$.
The family of elements of $\ell^\infty$ which converges to $0$ will be denoted by
$\ell^\infty_0$,
\begin{equation}
  \ell^\infty_0:=\left\{x\in \ell^\infty\;\left|\; 
  \lim_{i\to\infty} x_i=0
    \right.\right\}.
  \label{eq:linf0}
\end{equation}
We will use the canonical identifications
\[ (\ell^\infty_0)^*=\ell^1,\qquad (\ell^1)^*=\ell^\infty.
\]

Gossez defined in~\cite{JPGos1} the following operator
\begin{equation}
  \label{eq:gt}
G:\ell^1\to\ell^\infty,\qquad (G x)_{n}:=\sum_{k>n}x_k-\sum_{k<n} x_k,
\end{equation}
which is linear, continuous, anti-symmetric and maximal monotone.
Let 
\begin{equation}
  \label{eq:sig.e}
e:=(1,1,1,\dots)\in\ell^{\infty}.
\end{equation}

\begin{lemma}\label{lem:premain}
  For any $x\in \ell^1$, $G(x)+\inner{x}{e}e\in \ell^\infty_0$.
\end{lemma}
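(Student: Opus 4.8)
The plan is to compute $G(x) + \langle x, e\rangle e$ coordinatewise and show that the $n$-th coordinate tends to $0$ as $n \to \infty$. First I would write out the $n$-th coordinate explicitly. By definition $(Gx)_n = \sum_{k>n} x_k - \sum_{k<n} x_k$, and since $\langle x, e\rangle = \sum_{k=1}^\infty x_k$ (the duality product of $x \in \ell^1$ with $e \in \ell^\infty$), the $n$-th coordinate of $\langle x, e\rangle e$ is just the full sum $\sum_k x_k$. Adding these, the $n$-th coordinate of $G(x) + \langle x, e\rangle e$ becomes
\[
\sum_{k>n} x_k - \sum_{k<n} x_k + \sum_{k=1}^\infty x_k.
\]

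The main step is to simplify this expression. I would split the full sum as $\sum_{k=1}^\infty x_k = \sum_{k<n} x_k + x_n + \sum_{k>n} x_k$. Substituting, the two copies of $\sum_{k<n} x_k$ cancel with opposite signs, leaving
\[
2\sum_{k>n} x_k + x_n.
\]
So the task reduces to showing this tends to $0$ as $n \to \infty$.

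This is the crux, but it is routine given $x \in \ell^1$. Since $x \in \ell^1$, the tail $\sum_{k>n} x_k \to 0$ as $n \to \infty$ (tails of a convergent series vanish), and likewise $x_n \to 0$ (the general term of a convergent series goes to zero). Hence $2\sum_{k>n} x_k + x_n \to 0$, which shows every coordinate sequence of $G(x) + \langle x, e\rangle e$ converges to $0$, i.e. it lies in $\ell^\infty_0$. I do not anticipate a serious obstacle here; the only point requiring care is the bookkeeping of which index ranges appear with which sign, so that the cancellation is carried out correctly, and confirming that the resulting element is genuinely in $\ell^\infty$ (boundedness) before invoking the limit to place it in $\ell^\infty_0$. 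Boundedness is immediate since $G$ maps into $\ell^\infty$ and $\langle x, e\rangle e$ is a bounded sequence.
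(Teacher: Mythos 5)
Your proof is correct and follows essentially the same route as the paper: a direct coordinatewise computation showing the $n$-th entry of $G(x)+\inner{x}{e}e$ tends to $0$, using that for $x\in\ell^1$ the tail sums $\sum_{k>n}x_k$ and the terms $x_n$ vanish as $n\to\infty$. The paper phrases it as $\lim_n (Gx)_n=-\inner{x}{e}$ while you carry out the explicit cancellation to get $2\sum_{k>n}x_k+x_n$, but these are the same calculation.
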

\begin{proof}
  Direct use of definitions \eqref{eq:gt} and \eqref{eq:sig.e}
  shows that for any $x\in \ell^1$,
  \[
  \lim_{n\to\infty} (G x)_n=-\inner{x}{e},
  \]
  which trivially implies that  $G(x)+\inner{x}{e}e\in
  \ell^\infty_0$. 
\end{proof}

\begin{proposition}
Let $X=\ell^{\infty}_0$. The operator
\begin{equation}
  \label{eq:op.a}
A:\ell^1\to\ell^\infty_0,\quad x\mapsto G(x)+\inner{x}{e}e
\end{equation}
satisfies the assumptions of Theorem~\ref{thm:main}. Hence $A^{-1}$ is a maximal
monotone operator of type (D) and the inverse of its unique maximal monotone
extension to the bidual is not of type (D).
\end{proposition}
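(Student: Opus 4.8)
The plan is to verify that the operator $A:\ell^1\to\ell^\infty_0$ defined in \eqref{eq:op.a} meets every hypothesis of Theorem~\ref{thm:main}, so that the three conclusions follow automatically. The hypotheses are that $A$ be linear, monotone, injective, everywhere defined, and that there exist some $x_0^{**}\in X^{**}\setminus X$ satisfying the finiteness condition \eqref{eq:hipo}. Since $X=\ell^\infty_0$, the relevant duals are $X^*=\ell^1$ and $X^{**}=\ell^\infty$, and $X$ sits inside $X^{**}$ as $\ell^\infty_0\subset\ell^\infty$, so $X^{**}\setminus X=\ell^\infty\setminus\ell^\infty_0$ is precisely the bounded sequences that do not converge to $0$. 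Linearity and everywhere-definedness of $A$ are immediate from the formula, and Lemma~\ref{lem:premain} guarantees the range lands in $\ell^\infty_0$, so $A$ is a well-defined linear map $\ell^1\to\ell^\infty_0$.

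Next I would establish monotonicity and injectivity. Here I expect the anti-symmetry of Gossez's operator $G$ to do the heavy lifting: for the duality pairing between $X^*=\ell^1$ and $X=\ell^\infty_0$, one computes $\inner{A(x)}{x}=\inner{G(x)}{x}+\inner{x}{e}\inner{e}{x}$. The anti-symmetric part contributes $\inner{G(x)}{x}=0$ (this is what ``anti-symmetric'' means for $G$), leaving $\inner{A(x)}{x}=\inner{x}{e}^2\geq 0$, which gives monotonicity of the linear operator $A$. Injectivity should follow from the injectivity of $G$ (or directly: if $A(x)=0$ then $G(x)=-\inner{x}{e}e$, and examining this identity coordinate-wise, together with $\inner{x}{e}^2=\inner{A(x)}{x}=0$, forces $\inner{x}{e}=0$ and hence $G(x)=0$, so $x=0$). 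I would state these computations but not belabor the coordinate bookkeeping.

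The decisive step is to exhibit $x_0^{**}\in\ell^\infty\setminus\ell^\infty_0$ for which the supremum in \eqref{eq:hipo} is finite. The natural candidate is $x_0^{**}=e=(1,1,1,\dots)$, which is bounded but does not converge to $0$, hence lies in $X^{**}\setminus X$. With this choice, $\inner{x^*}{x_0^{**}}=\inner{x^*}{e}$ and $\inner{A(x^*)}{x^*}=\inner{x^*}{e}^2$ by the computation above (writing $x^*=x\in\ell^1$ for the variable of the supremum, noting $X^*=\ell^1$). Thus
\[
\sup_{x^*\in X^*}\inner{x^*}{x_0^{**}}-\inner{A(x^*)}{x^*}
=\sup_{t\in\R}\;t-t^2=\tfrac14<\infty,
\]
where $t=\inner{x^*}{e}$ ranges over all of $\R$. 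This confirms \eqref{eq:hipo}, and the main obstacle—checking that the otherwise-divergent supremum is tamed by exactly this $x_0^{**}$—reduces to recognizing that $A$ was built so that its quadratic form is the perfect square $\inner{x}{e}^2$. Having verified all hypotheses, I would invoke Theorem~\ref{thm:main} directly to conclude that $A^{-1}$ is maximal monotone of type (D) while the inverse of its unique maximal monotone extension to the bidual fails to be of type (D), which is the assertion of the proposition.
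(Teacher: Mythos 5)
Your proposal is correct and takes essentially the same approach as the paper: linearity and everywhere-definedness via Lemma~\ref{lem:premain}, monotonicity from the anti-symmetry of $G$ reducing the quadratic form to $\inner{x}{e}^2$, injectivity by using $\inner{A(x)}{x}=0$ to reduce $A(x)=0$ to $G(x)=0$, and the choice $x^{**}_0=e$ turning the supremum in \eqref{eq:hipo} into $\sup_{t\in\R}\, t-t^2=\tfrac14$. The only detail you defer --- that $G(x)=0$ forces $x=0$ --- is precisely the coordinate computation the paper carries out, namely $0=(Gx)_n-(Gx)_{n+1}=x_n+x_{n+1}$, so $x_n=(-1)^{n+1}x_1$ and summability forces $x=0$.
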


\begin{proof}
By Lemma~\ref{lem:premain}, $A$ is everywhere defined. Linearity of $A$ follows from \eqref{eq:gt} and
\eqref{eq:sig.e}. Since Gossez's operator $G$ is anti-symmetric, for any $x\in\ell^1$
\begin{align}
  \label{eq:sp}
  \inner{A(x)}{x}=\inner{G(x)+\inner{x}{e}e}{x}=
  \inner{\inner{x}{e}e}{x}=\inner{e}{x}^2\geq 0,
\end{align}
which shows that $A$ is monotone.


To prove that $A$ is injective, it suffices to
show that its kernel is trivial. Suppose that 
\[
 x\in\ell^1, \qquad
A(x)=0.
\]
Using \eqref{eq:sp} we conclude that 
 $\inner{x}{e}^2=\inner{A(x)}{x}=0$.
 Therefore,  $\inner{e}{x}=0$, which combined with \eqref{eq:op.a} yields 
$G(x)=A(x)=0$. Using \eqref{eq:gt} we have
\[
 0=(Gx)_n-(Gx)_{n+1}=x_n+x_{n+1},\qquad n=1,2,\dots
\]
Hence $x_n=(-1)^{n+1}x_1$. Since $x\in \ell^1$, we conclude that
$x_n=0$ for all $n$.

We claim that 
\begin{equation}
  \label{eq:claim}
\sup_{x\in \ell^1} \inner{x}{e}-\inner{A(x)}{x}=\frac 14<\infty.
\end{equation}
Indeed, since $t-t^2\leq \frac{1}{4}$, for any $t\in\R$, using
\eqref{eq:sp} we have
\[
\inner{x}{e}-\inner{A(x)}{x}=\inner{x}{e}-\inner{x}{e}^2\leq \frac{1}{4},
\]
for any $x\in\ell^1$, with equality at $x=(1/2,0,0,\dots)$. 
To end the proof that $A$ satisfies the assumptions of Theorem~\ref{thm:main},
note that
\[
e\in \ell^\infty\setminus \ell^\infty_0=(\ell^\infty_0)^{**}\setminus\ell^\infty_0.
\]
which, combined with \eqref{eq:claim} shows that \eqref{eq:hipo} is satisfied
with $x^{**}_0=e$.

The second part of the propositions follows from the first part and Theorem~\ref{thm:main}.
\end{proof}

For $A$ as in \eqref{eq:op.a}, the operator $T:\ell^\infty_0\times\R\tos\ell^1\times\R$
whose graph is
\[
\graph(T)=\{ ( (A(x^*),t),(x^*,t^3))\;|\; x^*\in \ell^1,t\in\R\}
\]
is an example of a non-linear type (D) operator for which the inverse of
its maximal monotone to the bidual is not of type (D).


\end{document}